\newcommand\C{\mathbb{C}}
\newcommand\CC{\mathcal{C}}
\newcommand\F{\mathbb{F}}
\newcommand\NN{\mathcal{N}}
\renewcommand\P{\mathbb{P}}
\def\e{\varepsilon}
\DeclareMathOperator*{\Res}{Res}
\newtheorem{theorem}{Theorem}[section]
\newtheorem{lemma}[theorem]{Lemma}
\newtheorem{proposition}[theorem]{Proposition}
\newtheorem*{Mertensconj}{The Mertens Conjecture for Elliptic Curves over Finite Fields}
\newenvironment{deflist}[1]
{\begin{list}{}
	{\itemsep=0pt	\parsep=5pt	\topsep=0pt	\parskip=10pt
	\settowidth{\labelwidth}{\hspace{0.28cm}#1}
	\setlength{\leftmargin}{\labelwidth}
	\addtolength{\leftmargin}{\labelsep}
	}}
{\end{list}}
\begin{document}

\title{On the Mertens Conjecture for Elliptic Curves over Finite Fields}

\author{Peter Humphries}

\address{Department of Mathematics, Princeton University, Princeton, New Jersey 08544, USA}

\email{peterch@math.princeton.edu}

\keywords{Mertens conjecture, elliptic curve, finite field, function field}

\subjclass[2010]{11N56 (primary); 11G20 (secondary).}

\thanks{This research was partially supported by an Australian Postgraduate Award.}

\begin{abstract}
We introduce an analogue of the Mertens conjecture for elliptic curves over finite fields. Using a result of Waterhouse, we classify the isogeny classes of elliptic curves for which this conjecture holds in terms the size of the finite field and the trace of the Frobenius endomorphism acting on the curve.
\end{abstract}

\maketitle

\section{The Mertens Conjecture}

Let $\mu(n)$ denote the M\"{o}bius function, so that for a positive integer $n$,
	\[\mu(n) = \begin{cases}
1 & \text{if $n = 1$,}	\\
(-1)^t & \text{if $n$ is the product of $t$ distinct primes,}	\\
0 & \text{if $n$ is divisible by a perfect square.}
\end{cases}\]
The Mertens conjecture states that the summatory function of the M\"{o}bius function,
	\[M(x) = \sum_{n \leq x}{\mu(n)},
\]
satisfies the inequality
\begin{equation}\label{Mertensconjecture}
|M(x)| \leq \sqrt{x}
\end{equation}
for all $x \geq 1$. This conjecture stems from the work of Mertens \cite{Mertens}, who in 1897 calculated $M(x)$ from $x = 1$ up to $x = 10\,000$ and arrived at the conjecture \eqref{Mertensconjecture}. Notably, this conjecture implies that all of the nontrivial zeroes of the Riemann zeta function $\zeta(s)$ lie on the line $\Re(s) = 1/2$ (that is, that the Riemann hypothesis is true), and also that all such zeroes are simple.

However, Ingham \cite{Ingham} showed in 1942 that a consequence of the Mertens conjecture is that the imaginary parts of the zeroes of $\zeta(s)$ in the upper half-plane must be linearly dependent over the rational numbers, a relation that seems unlikely; while there is yet to be found strong theoretical evidence for the falsity of such a linear dependence, some limited numerical calculations have failed to find any such linear relations \cite{Bateman}, \cite{Best}. Using methods closely related to the work of Ingham, Odlyzko and te Riele \cite{Odlyzko} disproved the Mertens conjecture in 1984, and in fact showed that
\begin{align*}
\limsup_{x \to \infty} \frac{M(x)}{\sqrt{x}} & > 1.06,	\\
\liminf_{x \to \infty} \frac{M(x)}{\sqrt{x}} & < -1.009.
\end{align*}
These bounds have since been improved to $1.218$ and $-1.229$ respectively by Kotnik and te Riele \cite{Kotnik}, and most recently to $1.6383$ and $-1.6383$ respectively by Best and Trudgian \cite{Best}. It seems likely that
\begin{align*}
\limsup_{x \to \infty} \frac{M(x)}{\sqrt{x}} & = \infty,	\\
\liminf_{x \to \infty} \frac{M(x)}{\sqrt{x}} & = -\infty,
\end{align*}
and, from the work of Ingham \cite{Ingham}, this is known to follow from the assumption of the Riemann hypothesis and the linear independence over the rational numbers of the imaginary parts of the zeroes of $\zeta(s)$ in the upper half-plane.

\section{The Mertens Conjecture for Curves over Finite Fields}

A natural variant of this problem is to formulate an analogue of the Mertens conjecture in the setting of global function fields, that is, for nonsingular projective curves over finite fields. The advantage of this function field setting, as opposed to the classical case, is that the Riemann hypothesis is proved, and the associated zeta functions have only finitely many zeroes. Indeed, when the curve is simply the projective line $\P^1$, so that the associated function field is $\F_q(t)$, the Mertens conjecture is true for trivial reasons, as in this case the summatory function of the M\"{o}bius function is bounded in absolute value by $q$; see \cite[p.\ 5]{Cha}. In this paper, we study the Mertens conjecture in the next-most simple case, namely when the genus of the curve is one, that is to say, the case of elliptic curves over finite fields. Our main result is Theorem \ref{mainellcurvethm}, where we state that it is indeed possible for certain elliptic curves to satisfy a formulation of the Mertens conjecture, and we classify which curves satisfy this conjecture in terms of the size of the finite field $q$ and the trace of the Frobenius endomorphism acting on the elliptic curve.

Let $E$ be an elliptic curve over a finite field $\F_q$ of characteristic $p$. For an effective divisor $N$ of $E$, we define the M\"{o}bius function of $E/\F_q$ to be
	\[\mu_{E/\F_q}(N) = \begin{cases}
1 & \text{if $N$ is the zero divisor,}	\\
(-1)^t & \text{if $N$ is the sum of $t$ distinct prime divisors of $E$,}	\\
0 & \text{if a prime divisor of $E$ divides $N$ with order at least $2$.}
\end{cases}\]
We are interested in the behaviour of the summatory function of the M\"{o}bius function of $E/\F_q$,
	\[M_{E/\F_q}(X) = \sum_{0 \leq \deg(N) \leq X - 1}{\mu_{E/\F_q}(N)},
\]
where $X$ is a positive integer. We wish to determine the validity of the following conjecture.

\begin{Mertensconj}
Let $E$ be an elliptic curve over $\F_q$, and let $M_{E/\F_q}(X)$ be the summatory function of the M\"{o}bius function of $E/\F_q$. Then for all sufficiently large positive integers $X$,
	\[\left|M_{E/\F_q}(X)\right| \leq q^{X / 2}.
\]
\end{Mertensconj}

Note that in the definition of $M_{E/\F_q}(X)$, we are summing over effective divisors $N$ of $E$ for which $0 \leq \deg(N) \leq X - 1$, as opposed to the classical case, where $M(x)$ is a sum over positive integers $n$ for which $1 \leq n \leq x$. It is also noteworthy that the classical form of the Mertens conjecture states that $|M(x)| \leq \sqrt{x}$, whereas our function field version instead states that $\left|M_{E/\F_q}(X)\right| \leq q^{X / 2}$. The reason that $q^X$ replaces $x$ is due to the fact that the absolute norm of an effective divisor $N$ of $E$ is $q^{\deg(N)}$, whereas the absolute norm of a positive integer $n$ is merely $n$ itself. 

Our main result is the following.

\begin{theorem}\label{mainellcurvethm}
Let $E$ be an elliptic curve over a finite field $\F_q$ of characteristic $p$. Then the Mertens conjecture for $E/\F_q$ is true if and only if the order of the finite field $q$ and the trace $a$ of the Frobenius endomorphism acting on $E$ over $\F_q$ satisfy precisely one of the following conditions:
\begin{enumerate}
\item[(1)]	$q = p^m$ with $a = 2$, where either $m$ is arbitrary and $p \neq 2$, or $m = 1$ and $p = 2$,
\item[(2)]	$q = p^m$ with $a = \sqrt{q}$, where $m$ is even and $p \not\equiv 1 \pmod{3}$,
\item[(3)]	$q = p^m$ with $a = 0$, where either $m$ is even and $p \not\equiv 1 \pmod{4}$, or $m$ is odd.
\end{enumerate}
In all these cases, we have that
	\[\left|M_{E/\F_q}(X)\right| \leq q^{X / 2}
\]
for all $X \geq 1$, and also that for every $\e > 0$, there exist infinitely many positive integers $X$, dependent on $q$ and $a$, for which
	\[\left|M_{E/\F_q}(X)\right| > (1 - \e) q^{X / 2}.
\]
\end{theorem}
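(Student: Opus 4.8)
The plan is to convert the problem into one about the coefficients of a rational generating function and then into a question in Diophantine approximation. First I would recall that the zeta function of $E/\F_q$ is $Z_E(T) = (1 - aT + qT^2)/((1-T)(1-qT))$, and that its reciprocal is the Dirichlet series $\sum_N \mu_{E/\F_q}(N) T^{\deg N} = 1/Z_E(T)$, summed over effective divisors $N$. Since multiplying a generating function by $(1-T)^{-1}$ produces the generating function of its partial sums, one gets $\sum_{X \geq 1} M_{E/\F_q}(X) T^{X-1} = (1 - qT)/(1 - aT + qT^2)$. Factoring $1 - aT + qT^2 = (1 - \alpha T)(1 - \beta T)$, where $\alpha + \beta = a$, $\alpha\beta = q$, and $|\alpha| = |\beta| = \sqrt q$ by the (proved) Riemann hypothesis for $E$, a partial-fraction expansion yields the closed form
\[M_{E/\F_q}(X) = \frac{\alpha^X(1 - \beta) - \beta^X(1 - \alpha)}{\alpha - \beta},\]
with the usual confluent modification when $a = \pm 2\sqrt q$ forces $\alpha = \beta$.

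The heart of the argument is to normalise by setting $R(X) = M_{E/\F_q}(X)/q^{X/2}$ and to decide when $\limsup_X |R(X)| \leq 1$. When $a = \pm 2\sqrt q$ the closed form becomes $\alpha^{X-1}(X - \alpha(X-1))$, so $|R(X)|$ grows linearly and the conjecture fails outright. Otherwise $\alpha = \sqrt q\, e^{i\theta}$ with $\theta \in (0,\pi)$ and $a = 2\sqrt q\cos\theta$, and a short computation rewrites $R(X) = \cos(X\theta) + C\sin(X\theta)$ with $C = (2 - a)/\sqrt{4q - a^2}$, whence $|R(X)| \leq \sqrt{1 + C^2} = \sqrt{4(q + 1 - a)/(4q - a^2)}$. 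The decisive elementary inequality is that $\sqrt{1 + C^2} \leq 1$ holds if and only if $(a - 2)^2 \leq 0$, that is, if and only if $a = 2$; in that case $C = 0$ and $R(X) = \cos(X\theta)$, so $|R(X)| \leq 1$ for every $X$ while $\limsup_X |R(X)| = 1$, giving both the upper bound and the near-extremal lower bound.

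It remains to treat $a \neq 2$, where $\sqrt{1 + C^2} > 1$. If $\theta/\pi$ is irrational, then $\{X\theta \bmod 2\pi\}$ is equidistributed, so $\limsup_X |R(X)| = \sqrt{1 + C^2} > 1$ and the conjecture fails. Thus I must isolate exactly which admissible pairs $(q,a)$ make $\theta/\pi$ rational. Here I would combine Waterhouse's classification with the observation that $2\cos\theta = a/\sqrt q$ is then an algebraic integer lying in $\mathbb{Q}(\sqrt q)$ (rational when $m$ is even, quadratic over $\mathbb{Q}$ when $m$ is odd), so its degree over $\mathbb{Q}$ is at most two; the Niven-type classification of rational angles then forces $\cos\theta \in \{0, \pm 1/2, \pm\sqrt2/2, \pm\sqrt3/2\}$, equivalently the supersingular values $a \in \{0, \pm\sqrt q, \pm\sqrt{pq}\}$. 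For each of these finitely many rational angles $R(X)$ is periodic, and I would simply tabulate one period: $a = 0$ (angle $\pi/2$) gives $\max_X |R(X)| = 1$, attained at even $X$; $a = \sqrt q$ (angle $\pi/3$) again gives $\max_X|R(X)| = 1$; whereas $a = -\sqrt q$ gives $1 + q^{-1/2} > 1$, and each $a = \pm\sqrt{pq}$ exceeds $1$ unless it degenerates to $a = 2$ (which occurs only at $q = 2$). Matching these verdicts against Waterhouse's realisability criteria produces precisely the three families (1)--(3), and checking the two boundary coincidences (at $q = 4$, where $a = 2 = \sqrt q$ is counted in (2), and at $q = 2$, where $a = 2$ is counted in (1)) confirms that each admissible curve satisfies exactly one condition.

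The step I expect to be the main obstacle is the isolation of the rational-angle cases: ruling out sporadic rational multiples of $\pi$ among the ordinary ($\gcd(a,p) = 1$) traces requires the degree bound on $2\cos\theta$ to interact cleanly with the integrality of $a$ and the field $\mathbb{Q}(\sqrt q)$, and one must verify that the only quadratic cyclotomic values $\pm\sqrt2, \pm\sqrt3$ that can arise correspond exactly to $p = 2$ and $p = 3$, so that no admissible curve is overlooked. The remaining periodic computations and the bookkeeping against Waterhouse's list are routine once this dichotomy is secured.
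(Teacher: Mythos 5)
Your proposal is correct, reaches the same closed forms as the paper, and its case-by-case verdicts agree with the paper's tables; but it differs in two genuine respects, both of which make the argument more self-contained. First, you obtain the exact formula for $M_{E/\F_q}(X)$ by observing that $\sum_{X \geq 1} M_{E/\F_q}(X) T^{X-1} = (1-qT)/(1-aT+qT^2)$ (multiplying $1/Z_{E/\F_q}$ by $(1-T)^{-1}$) and then taking partial fractions, with the confluent case $a = \pm 2\sqrt{q}$ handled by the same expansion; the paper instead extracts the coefficients of $1/Z_{E/\F_q}(u)$ via a residue lemma (following Cha) on contours $|u| = q^T$ with $T \to \infty$, and then sums a geometric series. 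Your derivation is more elementary and dispenses with the contour-integral lemma entirely. Second, where the paper simply quotes, as part of its Lemma \ref{Waterhouselemma}, that ordinary traces ($p \nmid a$, $|a| < 2\sqrt{q}$) have Frobenius angle with $\theta/\pi$ irrational, you prove the rational/irrational dichotomy yourself: if $\theta/\pi$ is rational then $2\cos\theta = a/\sqrt{q}$ is an algebraic integer of degree at most $2$, and since it is either rational ($m$ even) or a rational multiple of $\sqrt{p}$ ($m$ odd), the cyclotomic classification forces $2\cos\theta \in \{0, \pm 1, \pm 2, \pm\sqrt{2}, \pm\sqrt{3}\}$, i.e.\ $p \mid a$. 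The one point needing care, which you correctly flag as the main obstacle, is that the degree bound alone also permits the values $(\pm 1 \pm \sqrt{5})/2$ coming from $5$th and $10$th roots of unity; these are excluded precisely because they are neither rational nor rational multiples of a single $\sqrt{p}$, so the step does go through. Everything else --- Weyl equidistribution giving $\limsup |R(X)| = \sqrt{1+C^2} > 1$ when $a \neq 2$ and $\theta/\pi$ is irrational, the periodic tabulation for $a \in \{0, \pm\sqrt{q}, \pm\sqrt{2q}, \pm\sqrt{3q}\}$, the linear growth when $a = \pm 2\sqrt{q}$, and the final matching against Waterhouse's realisability criteria including the boundary coincidences at $q = 2$ and $q = 4$ --- follows the same path as the paper's proof.
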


In a related article \cite{Humphries}, the author studies the Mertens conjecture for higher-genus curves $C$ over finite fields $\F_q$. There is as yet no classification of isogeny classes for curves of a given genus $g$ outside of Waterhouse's classification of elliptic curves \cite{Waterhouse}, as well as the recent classification of Howe, Nart, and Ritzenthaler of curves of genus $2$ \cite{Howe}, so it is no longer possible to determine directly the isogeny classes of curves for which the Mertens conjecture holds. Instead, the author studies the average number of curves satisfying the Mertens conjecture in a particular family of curves over finite fields, and shows that for a curve $C$ in the chosen family, we ought to expect that
	\[\limsup_{X \to \infty} \frac{\left|M_{C/\F_q}(X)\right|}{q^{X / 2}} > 1.
\]

\section{Explicit Expressions for $M_{C\F_q}(X) / q^{X / 2}$}

To study $M_{E/\F_q}(X)$, we must first introduce the zeta function of $E/\F_q$, $Z_{E/\F_q}(u)$. This is defined initially for a complex variable $u$ in the open disc $|u| < q^{-1}$ via the absolutely convergent series
 	\[Z_{E/\F_q}(u) = \exp\left(\sum^{\infty}_{n = 1}{\# E\left(\F_{q^n}\right) \frac{u^n}{n}}\right).
\]
Equivalently, $Z_{E/\F_q}(u) = \zeta_{E/\F_q}(s)$ for $u = q^{-s}$, where
	\[\zeta_{E/\F_q}(s) = \sum_{D \geq 0}{\frac{1}{{\NN D}^s}}.
\]
Here the sum is over all effective divisors $D$ of $E$, and $\NN D = q^{\deg(D)}$ denotes the absolute norm of $D$. This Dirichlet series is absolutely convergent for $\Re(s) > 1$, with an Euler product expansion
	\[\zeta_{E/\F_q}(s) = \prod_{P}{\frac{1}{1 - {\NN P}^{-s}}},
\]
where the product is over all prime divisors $P$ of $E$. This in turn implies that $Z_{E/\F_q}(u)$ is nonvanishing in the open disc $|u| < q^{-1}$. Much more than this is true: $Z_{E/\F_q}(u)$ extends meromorphically to the entire complex plane, satisfies a certain function equation, and also a certain form of the Riemann hypothesis.

\begin{theorem}[see {\cite[Theorems 5.9 and 5.10]{Rosen}}]
Given an elliptic curve $E$ over $\F_q$, there exists a quadratic polynomial $P_{E/\F_q}(u)$ such that for $|u| < q^{-1}$,
\begin{equation}\label{zetafactor}
Z_{E/\F_q}(u) = \frac{P_{E/\F_q}(u)}{(1 - u)(1 - q u)}.
\end{equation}
This yields a meromorphic extension of $Z_{E/\F_q}(u)$ to the whole complex plane, with simple poles at $u = q^{-1}$ and $u = 1$. Furthermore, $Z_{E/\F_q}(u)$ satisfies the functional equation
	\[Z_{E/\F_q}(u) = q^{g - 1} u^{2(g - 1)} Z_{E/\F_q}\left(\frac{1}{q u}\right).
\]
Finally, the polynomial $P_{E/\F_q}(u)$ is of the form
	\[P_{E/\F_q}(u) = 1 - au + qu^2,
\]
with $a$ an integer satisfying $|a| \leq 2 \sqrt{q}$, so that $a = 2 \sqrt{q} \cos \theta$ for some $0 \leq \theta \leq \pi$.
\end{theorem}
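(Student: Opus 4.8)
The plan is to establish the three assertions in sequence---rationality with the explicit quadratic numerator, the functional equation, and the Hasse bound $|a| \leq 2\sqrt{q}$---obtaining the first two from the Riemann--Roch theorem and reserving the substantive work for the last. First I would pass from the exponential definition of $Z_{E/\F_q}(u)$ to the divisor sum: writing $u = q^{-s}$, one has $\zeta_{E/\F_q}(s) = \sum_{n \geq 0} b_n u^n$, where $b_n$ is the number of effective divisors of $E$ of degree $n$, the agreement with the exponential form following from the standard identity expressing $\#E(\F_{q^n})$ in terms of the closed points of each degree $d \mid n$. To compute $b_n$ I would group the effective divisors of degree $n$ according to their class in $\mathrm{Pic}^n(E)$. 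Since $E$ carries the rational point $O$, the degree map $\mathrm{Pic}(E) \to \mathbb{Z}$ is surjective with kernel $\mathrm{Pic}^0(E) \cong E(\F_q)$, so each $\mathrm{Pic}^n(E)$ has exactly $h := \#E(\F_q)$ elements. The number of effective divisors in a class $[D]$ is $(q^{\ell(D)} - 1)/(q - 1)$, and since a genus-one curve has trivial canonical class, Riemann--Roch gives $\ell(D) = \deg D$ whenever $\deg D \geq 1$, while forcing $\ell(D) = 0$ on the $h - 1$ nontrivial classes of degree $0$. Hence $b_0 = 1$ and $b_n = h(q^n - 1)/(q - 1)$ for $n \geq 1$.

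Summing the two geometric series that result, I would obtain the closed form
\[
Z_{E/\F_q}(u) = 1 + \frac{hu}{(1 - u)(1 - qu)} = \frac{1 - (q + 1 - h)u + qu^2}{(1 - u)(1 - qu)},
\]
which is \eqref{zetafactor} with $P_{E/\F_q}(u) = 1 - au + qu^2$ and $a := q + 1 - h$; this power series converges precisely for $|u| < q^{-1}$ and supplies the meromorphic continuation with simple poles at $u = 1$ and $u = q^{-1}$. The functional equation I would then verify by direct substitution: the palindromic identity $qu^2 P_{E/\F_q}(1/(qu)) = P_{E/\F_q}(u)$, together with $(1 - 1/(qu))(1 - 1/u) = (1 - u)(1 - qu)/(qu^2)$, collapses $Z_{E/\F_q}(1/(qu))$ to $Z_{E/\F_q}(u)$. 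This is exactly the $g = 1$ instance of the stated equation, where the prefactor $q^{g-1}u^{2(g-1)}$ degenerates to $1$, and it is the analytic shadow of the Riemann--Roch duality pairing degree-$n$ classes with degree-$(2g - 2 - n)$ classes.

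The remaining and genuinely hard assertion is the bound $|a| \leq 2\sqrt{q}$, equivalently the Riemann hypothesis that the reciprocal roots of $P_{E/\F_q}$ have absolute value $\sqrt{q}$. Here I would argue through the Frobenius endomorphism $\phi$ of $E$ and its dual $\phi^\vee$, which satisfy $\phi + \phi^\vee = a$ and $\phi\phi^\vee = \deg\phi = q$, so that $\phi^2 - a\phi + q = 0$ and $\#E(\F_q) = \deg(1 - \phi) = q + 1 - a$, in agreement with the count above. The decisive input is that the degree function is a positive-definite quadratic form on $\mathrm{End}(E)$: for all integers $m, n$ one has $\deg(m - n\phi) = m^2 - amn + qn^2 \geq 0$, vanishing only at $(m, n) = (0, 0)$. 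A binary quadratic form nonnegative on $\mathbb{Z}^2$---hence on $\mathbb{Q}^2$ by homogeneity and on $\mathbb{R}^2$ by density---must have nonpositive discriminant, so $a^2 - 4q \leq 0$; thus the roots of $P_{E/\F_q}$ are complex conjugates of modulus $q^{-1/2}$, and $a = 2\sqrt{q}\cos\theta$ for some $0 \leq \theta \leq \pi$. I expect the positivity of the degree form---resting on the theory of the dual isogeny, or equivalently on the Hodge-index inequality for the intersection pairing on $E \times E$---to be the single nontrivial ingredient, with everything else amounting to bookkeeping with Riemann--Roch.
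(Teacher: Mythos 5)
Your proof is correct, but there is nothing in the paper to compare it against: the paper states this theorem with a citation to Rosen \cite[Theorems 5.9 and 5.10]{Rosen} and gives no proof of its own. Measured against the cited source, your argument splits naturally in two. The rationality and functional equation via Riemann--Roch --- grouping effective divisors by class in $\mathrm{Pic}^n(E)$, using $\#\mathrm{Pic}^n(E) = h = \#E(\F_q)$ (which silently uses that Galois-invariant classes on a curve over a finite field are represented by rational divisors; immediate here via $P \mapsto [P]-[O]$), counting $(q^{\ell(D)}-1)/(q-1)$ effective divisors per class, and summing geometric series --- is exactly Rosen's genus-general proof specialized to $g = 1$, where it collapses to a short computation and has the bonus of producing the explicit numerator $1 - (q+1-h)u + qu^2$, hence the identity $a = q+1-\#E(\F_q)$ that the paper later relies on. For the bound $|a| \leq 2\sqrt{q}$ you genuinely diverge from the cited route: Rosen proves the Riemann hypothesis for arbitrary function fields (Stepanov--Bombieri), whereas you give the classical Hasse argument special to elliptic curves, namely positive semidefiniteness of $\deg(m - n\phi) = m^2 - amn + qn^2$ on $\mathbb{Z}^2$, extended to $\mathbb{R}^2$ by homogeneity and density, forcing $a^2 - 4q \leq 0$. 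That trade is sensible: your route is far more elementary and self-contained, at the cost of the one nontrivial input you correctly flag (the dual isogeny and the quadraticity of $\deg$), and it is confined to genus one. Two small completions worth making explicit: the claim that $u = 1$ and $u = q^{-1}$ are actually (simple) poles needs the one-line check that the numerator does not cancel either factor, i.e.\ $P_{E/\F_q}(1) = q + 1 - a = h \geq 1$ and $P_{E/\F_q}(q^{-1}) = h/q \neq 0$, which holds since $O \in E(\F_q)$; and $\#E(\F_q) = \deg(1 - \phi)$ uses separability of $1 - \phi$ (its differential is the identity), which you invoke only as a consistency check, so nothing essential hangs on it.
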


The polynomial $P_{E/\F_q}(u) = 1 - au + qu^2$ factorises over $\C$ as
	\[P_{E/\F_q}(u) = \left(1 - \gamma_1 u\right) \left(1 - \gamma_2 u\right)
\]
for some complex numbers $\gamma_1, \gamma_2$ with $\gamma_1 = \overline{\gamma_2}$; we call these the inverse zeroes of $\zeta_{E/\F_q}(s)$. Without loss of generality, we may assume that $\Im(\gamma_1) \geq 0$. As $\gamma_1 + \overline{\gamma_1} = a$ and $\gamma_1 \overline{\gamma_1} = q$, we have that
\begin{align*}
\gamma_1 & = \sqrt{q} e^{i \theta},	\\
\gamma_2 & = \sqrt{q} e^{- i \theta},
\end{align*}
where
\begin{equation}\label{thetaeq}
\theta = \arccos\left(\frac{a}{2 \sqrt{q}}\right).
\end{equation}
Geometrically, the integer $a$ is the trace of the Frobenius endomorphism acting on the elliptic curve $E$ over $\F_q$; the angle $\theta$ is called the Frobenius angle of $E/\F_q$. Notably, there are several restrictions on the possible values that $a$ may take. The following lemma fully characterises the possible values of $a$.

\begin{lemma}[Waterhouse {\cite[Theorem 4.1]{Waterhouse}}]\label{Waterhouselemma}
Let $a$ be an integer. Then $a$ is the trace of the Frobenius endomorphism acting on some elliptic curve $E$ over a finite field $\F_q$ of characteristic $p$ if and only if one of the following conditions is satisfied:
\begin{deflist}{\textup{(4) (iii)}}
\item[\textup{(1)}]	$a \not\equiv 0 \pmod{p}$ and $|a| < 2 \sqrt{q}$; for such an integer $a$, the associated Frobenius angle $\theta$ is such that $\theta / \pi$ is irrational,
\item[\textup{(2) (i)}]	$q = p^m$ with $a = 2 \sqrt{q}$, where $m$ is even, so that $\theta = 0$,
\item[\textup{(2) (ii)}]	$q = p^m$ with $a = - 2 \sqrt{q}$, where $m$ is even, so that $\theta = \pi$,
\item[\textup{(3) (i)}]	$q = p^m$ with $a = \sqrt{q}$, where $m$ is even and $p \not\equiv 1 \pmod{3}$, so that $\theta = \pi / 3$,
\item[\textup{(3) (ii)}]	$q = p^m$ with $a = - \sqrt{q}$, where $m$ is even and $p \not\equiv 1 \pmod{3}$, so that $\theta = 2 \pi / 3$,
\item[\textup{(4) (i)}]	$q = 2^m$ with $a = \sqrt{2 q}$, where $m$ is odd, so that $\theta = \pi / 4$,
\item[\textup{(4) (ii)}]	$q = 2^m$ with $a = - \sqrt{2 q}$, where $m$ is odd, so that $\theta = 3 \pi / 4$,
\item[\textup{(4) (iii)}]	$q = 3^m$ with $a = \sqrt{3 q}$, where $m$ is odd, so that $\theta = \pi / 6$,
\item[\textup{(4) (iv)}]	$q = 3^m$ with $a = - \sqrt{3 q}$, where $m$ is odd, so that $\theta = 5 \pi / 6$,
\item[\textup{(5)}]	$q = p^m$ with $a = 0$, where either $m$ is even and $p \not\equiv 1 \pmod{4}$, or $m$ is odd, so that $\theta = \pi / 2$.
\end{deflist}
That is, there is a bijective correspondence between the isogeny classes of elliptic curves over $\F_q$ and the values of the integer $a$ given in the above conditions.
\end{lemma}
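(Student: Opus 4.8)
The plan is to prove this classification through the framework of Honda--Tate theory. First I would record the structural input that yields the asserted bijection. By Tate's isogeny theorem, two elliptic curves over $\F_q$ are $\F_q$-isogenous if and only if their Frobenius endomorphisms share the characteristic polynomial $T^2 - aT + q$, equivalently if and only if they have the same trace $a$; this reduces the problem to deciding exactly which integers $a$ occur. The roots $\gamma_1, \gamma_2$ of $T^2 - aT + q$ are the Frobenius eigenvalues, and by the Hasse--Weil bound already recorded in the theorem quoted from \cite{Rosen} they are Weil $q$-numbers, algebraic integers all of whose conjugates have absolute value $\sqrt{q}$, so that $|a| \leq 2\sqrt{q}$ at once. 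The content of the lemma is therefore the determination of which Weil $q$-numbers $\pi = \gamma_1$ are realised by a one-dimensional abelian variety, namely an elliptic curve.

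The main tool I would invoke is the Honda--Tate correspondence between Galois conjugacy classes of Weil $q$-numbers and isogeny classes of simple abelian varieties over $\F_q$, together with its dimension formula
\[ 2 \dim A = \left[\mathrm{End}^{0}(A) : \mathbb{Q}(\pi)\right]^{1/2} \left[\mathbb{Q}(\pi) : \mathbb{Q}\right], \]
in which $\mathrm{End}^{0}(A)$ is a division algebra with centre $\mathbb{Q}(\pi)$ whose local invariants are $\tfrac12$ at each real place, $0$ at each finite place not above $p$, and $\tfrac{v(\pi)}{v(q)}\,[\mathbb{Q}(\pi)_v : \mathbb{Q}_p]$ at each place $v \mid p$. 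Imposing $\dim A = 1$ forces exactly one of two scenarios: either $\mathbb{Q}(\pi) = \mathbb{Q}$, so that $\pi = \pm\sqrt{q}$ is rational (whence $q$ is a square and $m$ is even) and $\mathrm{End}^{0}(A)$ is a quaternion algebra over $\mathbb{Q}$; or $\mathbb{Q}(\pi)$ is an imaginary quadratic field and $\mathrm{End}^{0}(A) = \mathbb{Q}(\pi)$, which happens precisely when every local invariant above vanishes.

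The case analysis then proceeds by reading off the Newton polygon of $T^2 - aT + q$, whose break is governed by $v_p(a)$. When $p \nmid a$ the slopes are $0$ and $1$, the curve is ordinary, $p$ splits in the imaginary quadratic field $\mathbb{Q}(\pi)$, the invariants at $p$ are automatically integral, and so every such $a$ with $|a| < 2\sqrt{q}$ is realised; this is case (1). When $p \mid a$ the curve is supersingular, both slopes equal $\tfrac12$, and $v_p(a) \geq m/2$; combined with $|a| \leq 2\sqrt{q}$ this leaves only the finite list of candidates $a \in \{0, \pm\sqrt{q}, \pm 2\sqrt{q}\}$ when $m$ is even, together with $a = 0$ and $a = \pm\sqrt{pq}$ for $p \in \{2,3\}$ when $m$ is odd. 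For each candidate I would identify $\mathbb{Q}(\pi)$ explicitly, obtaining $\mathbb{Q}$ for $a = \pm 2\sqrt{q}$, $\mathbb{Q}(\sqrt{-3})$ for $a = \pm\sqrt{q}$, and $\mathbb{Q}(\sqrt{-1})$ or $\mathbb{Q}(\sqrt{-p})$ for $a = 0$, and then test whether the invariant $\tfrac12\,[\mathbb{Q}(\pi)_v : \mathbb{Q}_p]$ at the prime(s) above $p$ is integral (an elliptic curve exists) or equal to $\tfrac12$ (the Weil number instead yields an abelian surface, and no elliptic curve occurs). The $\mathbb{Q}(\pi) = \mathbb{Q}$ case is always realised, since a quaternion algebra ramified at exactly the two places $p$ and $\infty$ exists, giving cases (2)(i) and (2)(ii) with no congruence restriction. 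By contrast, the integrality dichotomy produces the congruence conditions in the remaining cases: $a = \pm\sqrt{q}$ survives only when $p$ is non-split in $\mathbb{Q}(\sqrt{-3})$, i.e. $p \not\equiv 1 \pmod 3$, and $a = 0$ with $m$ even survives only when $p$ is non-split in $\mathbb{Q}(\sqrt{-1})$, i.e. $p \not\equiv 1 \pmod 4$, while the extra traces $\pm\sqrt{pq}$ appear only in characteristics $2$ and $3$, where $p$ ramifies and the invariant clears to an integer.

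I expect the delicate computation of the local invariants at $p$ to be the principal obstacle, since verifying precisely when the half-integer $\tfrac12\,[\mathbb{Q}(\pi)_v:\mathbb{Q}_p]$ clears to an integer is exactly the step separating the Weil numbers realised by an elliptic curve from those realised only by a higher-dimensional variety, and it hinges on the ramification and splitting of $p$ in each imaginary quadratic field. A final point is the irrationality of $\theta/\pi$ claimed in case (1): here I would argue that if $\theta/\pi = j/n$ with $\gcd(j,n) = 1$, then $(\gamma_1/\gamma_2)^n = e^{2ni\theta} = 1$, so $\gamma_1^n = \gamma_2^n$ and the $q^n$-power Frobenius on the base change $E \times_{\F_q} \F_{q^n}$ has a repeated eigenvalue, its trace being $\pm 2\sqrt{q^n}$; this makes the base change supersingular, and since supersingularity is a geometric property insensitive to the base field, $E$ itself would be supersingular, contradicting $p \nmid a$. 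Hence $\theta/\pi$ must be irrational.
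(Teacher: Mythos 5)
The paper offers no proof of this lemma at all: it is imported directly from Waterhouse \cite[Theorem 4.1]{Waterhouse}, and the Honda--Tate computation you outline (Tate's isogeny theorem to identify isogeny classes with traces, then the dimension formula and local invariants of $\mathrm{End}^0(A)$ to decide which Weil $q$-numbers yield $\dim A = 1$, with the congruence conditions on $p$ emerging from the splitting of $p$ in $\mathbb{Q}(\sqrt{-3})$ and $\mathbb{Q}(\sqrt{-1})$) is essentially Waterhouse's own argument, so your proposal is correct and follows the same route as the cited source. Your supplementary proof that $\theta/\pi$ is irrational in case (1) --- a rational angle would give $\gamma_1^n = \gamma_2^n$, hence trace $\pm 2\sqrt{q^n}$ over $\F_{q^n}$ and a supersingular base change of an ordinary curve --- is also sound, and is a genuine addition, since that clause is asserted in the paper without proof and is not part of Waterhouse's statement.
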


The method of proof of Theorem \ref{mainellcurvethm} involves determining an explicit expression for $M_{E/\F_q}(X)$ in terms of $a$ and $q$ by first studying the Dirichlet series
\begin{equation}\label{muDirichlet}
\sum_{D \geq 0}{\frac{\mu_{E/\F_q}(D)}{{\NN D}^s}}.
\end{equation}
This has previously been done for arbitrary nonsingular projective curves over finite fields by Cha \cite{Cha}, who uses the resulting expression to study the average size of the quantity
	\[\limsup_{X \to \infty} \frac{\left|M_{C/\F_q}(X)\right|}{q^{X / 2}}
\]
when averaged over a particular family of curves $C$, in the limit as the size of the finite field $\F_q$ tends to infinity. Our results are similar and follow the same method, but by restricting ourselves to the case of curves of genus one, we are able to determine exact formul\ae{}, while we also have the advantage of using the classification in Lemma \ref{Waterhouselemma} of the possible values of the trace of the Frobenius endomorphism.

To begin, we note that the M\"{o}bius function of $E/\F_q$ is multiplicative and satisfies $\mu_{E/\F_q}(P) = -1$ and $\mu_{E/\F_q}\left(P^t\right) = 0$ whenever $t \geq 2$  for any prime divisor $P$ of $E$, and so the Dirichlet series \eqref{muDirichlet} has the Euler product
	\[\sum_{D \geq 0}{\frac{\mu_{E/\F_q}(D)}{{\NN D}^s}} = \prod_{P}{\left(1 - {\NN P}^{-s}\right)}
\]
for $\Re(s) > 1$, which, upon comparing Euler products, yields the identity
\begin{equation}\label{muDir}
\sum_{D \geq 0}{\frac{\mu_{E/\F_q}(D)}{{\NN D}^s}} = \frac{1}{\zeta_{E/\F_q}(s)},
\end{equation}
which is valid for all $\Re(s) > 1$. On the other hand,
\begin{equation}\label{Dirmucoeff}
\sum_{D \geq 0}{\frac{\mu_{E/\F_q}(D)}{{\NN D}^s}} = \sum_{D \geq 0}{\frac{\mu_{E/\F_q}(D)}{q^{\deg(D)s}}} = \sum^{\infty}_{N = 0}{\frac{1}{q^{Ns}} \sum_{\deg(D) = N}{\mu_{E/\F_q}(D)}}.
\end{equation}
So determining an expression for the coefficients of the Dirichlet series for $1/\zeta_{E/\F_q}(s)$ using the known factorisation \eqref{zetafactor} of $\zeta_{E/\F_q}(s)$ and then comparing coefficients will lead us to a precise formula for $M_{E/\F_q}(X)$.

\begin{lemma}[cf.\ Cha {\cite[Proposition 2.2]{Cha}}]
For each $N \geq 0$ and any $T > 0$,
\begin{equation}\label{muCauchy}
\frac{1}{2 \pi i} \oint_{\CC_T}{\frac{1}{u^{N + 1}} \frac{1}{Z_{E/\F_q}(u)} \, du} = \sum_{\deg(D) = N}{\mu_{E/\F_q}(D)} + \sum_{\gamma} \Res_{u = \gamma^{-1}}{\frac{1}{u^{N + 1}} \frac{1}{Z_{E/\F_q}(u)}}
\end{equation}
where the sum is over the inverse zeroes $\gamma$ of $Z_{E/\F_q}(u)$, counted without multiplicity, and $\CC_T = \{z \in \C : |z| = q^T\}$. Furthermore, the left-hand side of \eqref{muCauchy} vanishes for $N \geq 1$.
\end{lemma}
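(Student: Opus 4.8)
The plan is to treat $1/Z_{E/\F_q}(u)$ simultaneously as a power series convergent near $u = 0$ and as an explicit rational function, and then to read off the coefficient $c_N := \sum_{\deg(D) = N} \mu_{E/\F_q}(D)$ as a residue via the residue theorem. First I would record that combining \eqref{muDir} and \eqref{Dirmucoeff} under the substitution $u = q^{-s}$ gives the power series
\[\frac{1}{Z_{E/\F_q}(u)} = \sum_{N = 0}^{\infty} c_N u^N, \qquad c_N = \sum_{\deg(D) = N} \mu_{E/\F_q}(D),\]
valid in the disc $|u| < q^{-1}$ where \eqref{muDir} converges. On the other hand, inverting \eqref{zetafactor} exhibits $1/Z_{E/\F_q}(u)$ as the rational function $(1 - u)(1 - qu)/P_{E/\F_q}(u)$, whose only finite poles are the zeroes of $P_{E/\F_q}(u) = (1 - \gamma_1 u)(1 - \gamma_2 u)$, that is, the points $u = \gamma^{-1}$, which satisfy $|\gamma^{-1}| = q^{-1/2}$ since $|\gamma_1| = |\gamma_2| = \sqrt{q}$.

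Consequently the integrand $u^{-(N+1)}/Z_{E/\F_q}(u)$ is a rational function whose poles lie at $u = 0$ and at the points $u = \gamma^{-1}$, all of which have modulus at most $q^{-1/2} < 1 < q^T$ and hence lie inside $\CC_T$ for every $T > 0$. I would then apply the residue theorem to the positively oriented circle $\CC_T$: its value equals the sum of the residues at these interior poles. The residue at $u = 0$ is the coefficient of $u^{-1}$ in the Laurent expansion there, which by the power series above equals exactly $c_N$; although the series converges only for $|u| < q^{-1}$, its coefficients coincide with the Laurent coefficients of the rational function at $0$, so this identification is legitimate. Collecting the remaining residues at the $u = \gamma^{-1}$ yields precisely \eqref{muCauchy}.

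For the vanishing of the left-hand side when $N \geq 1$, the decisive observation is the behaviour at infinity: since $\gamma_1 \gamma_2 = q$, the leading terms of numerator and denominator of $(1 - u)(1 - qu)/P_{E/\F_q}(u)$ agree, so $1/Z_{E/\F_q}(u) \to 1$ as $u \to \infty$ and the integrand is $O(|u|^{-(N+1)})$. Because the integrand is rational with all poles of modulus at most $q^{-1/2}$, the contour integral over $\CC_T$ is independent of $T$ for $T > 0$, as no pole is crossed when $T$ varies; hence I may let $T \to \infty$. The standard $ML$-estimate then bounds the integral by a constant times $q^T \cdot q^{-(N+1)T} = q^{-NT}$, which tends to $0$ as $T \to \infty$ exactly when $N \geq 1$. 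This forces the left-hand side to vanish for $N \geq 1$, whereas for $N = 0$ the decay is only $O(|u|^{-1})$ and the integral need not vanish.

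The step I expect to demand the most care is this last one: guaranteeing that the value of the contour integral is genuinely independent of $T$, so that the limiting argument is legitimate. This hinges on verifying that no pole of the integrand has modulus exceeding $q^{-1/2}$, together with the asymptotic $1/Z_{E/\F_q}(u) \to 1$, both of which follow directly from the explicit rational form of $1/Z_{E/\F_q}(u)$ and the relation $\gamma_1 \gamma_2 = q$. One could equivalently phrase the computation through the residue at infinity, but the $ML$-estimate is cleaner and makes transparent why the dichotomy falls precisely at $N = 1$.
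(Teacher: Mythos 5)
Your proposal is correct and follows essentially the same route as the paper: both identify the residue at $u = 0$ with $\sum_{\deg(D) = N} \mu_{E/\F_q}(D)$ by matching the power-series and rational-function representations of $1/Z_{E/\F_q}(u)$, apply the residue theorem on $\CC_T$, and then deduce the vanishing for $N \geq 1$ by letting $T \to \infty$ with an $ML$-type bound of order $q^{-NT}$. Your explicit remark that the integral is independent of $T$ because no poles are crossed is just a slight repackaging of the paper's observation that the right-hand side of \eqref{muCauchy} does not depend on $T$.
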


\begin{proof}
This is essentially proved in \cite[Proposition 2.2]{Cha} in more generality; the chief difference here is the use of a varying contour. Consider the contour integral
\begin{equation}\label{contourint}
\frac{1}{2\pi i} \oint_{\CC_T}{\frac{1}{u^{N+1}} \frac{1}{Z_{E/\F_q}(u)} \, du},
\end{equation}
where $\CC_T = \{z \in \C : |z| = q^T\}$. We can write $1/Z_{E/\F_q}(u)$ in two ways; via \eqref{zetafactor}, and via \eqref{muDir} and \eqref{Dirmucoeff}, yielding the identities
\begin{align}
\frac{1}{Z_{E/\F_q}(u)} & = \frac{(1 - u) (1 - qu)}{\left(1 - \gamma_1 u\right) \left(1 - \gamma_2 u\right)},	\label{1overZ}	\\
\frac{1}{Z_{E/\F_q}(u)} & = \sum^{\infty}_{N = 0}{u^N \sum_{\deg(D) = N}{\mu_{E/\F_q}(D)}}, \label{1overZmu}
\end{align}
where the first identity is valid for all $u \in \C \setminus \{\gamma_1^{-1}, \gamma_{2}^{-1}\}$, and the second identity is valid for all $|u| < q^{-1}$. So the singularities of the integrand of \eqref{contourint} in the interior of the closed curve $\CC_T$ occur at $u = 0$ and at $u = \gamma^{-1}$ for each zero $\gamma^{-1}$ of $Z_{E/\F_q}(u)$. At the singularity $u = 0$, we have by \eqref{1overZmu} that
	\[\Res_{u = 0} \frac{1}{u^{N+1}} \frac{1}{Z_{E/\F_q}(u)} = \sum_{\deg(D) = N}{\mu_{E/\F_q}(D)}.
\]
The identity \eqref{muCauchy} now follows by Cauchy's residue theorem. Now \eqref{1overZ} and the fact that $|u| = q^T$ and $\left|\gamma_1\right| = \left|\gamma_2\right| = \sqrt{q}$ imply that
\begin{align*}
\left|\frac{1}{2\pi i} \oint_{\CC_T}{\frac{1}{u^{N+1}} \frac{1}{Z_{E/\F_q}(u)} \, du}\right| & \leq \frac{1}{2\pi} \oint_{\CC_T}{\left|\frac{1}{u^{N+1}} \frac{1}{Z_{E/\F_q}(u)}\right| \, |du|}	\\
& \leq \frac{\left(q^T + 1\right) \left(q^{1 + T} + 1\right)}{\left(q^{1/2 + T} - 1\right)^2} q^{-N T}.
\end{align*}
As the right-hand side of \eqref{muCauchy} is independent of $T$, we may take the limit as $T$ tends to infinity in order to find that the contour integral above is zero if $N \geq 1$.
\end{proof}

We are now able to determine an explicit expression for $M_{E/\F_q}(X) / q^{X / 2}$. We must consider two cases: when $Z_{E/\F_q}(u)$ has only simple zeroes, and when $Z_{E/\F_q}(u)$ has a zero of order $2$. For the first case, we have the following result.

\begin{proposition}\label{ellcurveprop1}
Let $E$ be an elliptic curve over $\F_q$, and suppose that $Z_{E/\F_q}(u)$ has only simple zeroes. Then
\begin{equation}\label{Mertensasympa}
\frac{M_{E/\F_q}(X)}{q^{X / 2}} = 2 \sqrt{\frac{q + 1 - a}{4q - a^2}} \cos\left(\omega + X \theta\right),
\end{equation}
where $a$ is the trace of the Frobenius endomorphism, the Frobenius angle $\theta \in [0, \pi]$ is given by \eqref{thetaeq}, and $\omega \in (- \pi / 2, \pi / 2)$ is given by
	\[\omega = \arctan\left(\frac{a - 2}{\sqrt{4q - a^2}}\right).
\]
\end{proposition}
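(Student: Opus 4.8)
The plan is to read off the coefficients of $1/Z_{E/\F_q}(u)$ from the residue identity \eqref{muCauchy}, obtain a closed form for each coefficient, and then sum these over $0 \leq N \leq X-1$ to recover $M_{E/\F_q}(X)$. Since we assume $Z_{E/\F_q}(u)$ has only simple zeroes, the inverse zeroes $\gamma_1 = \sqrt{q}\,e^{i\theta}$ and $\gamma_2 = \sqrt{q}\,e^{-i\theta}$ are distinct, so $\theta \in (0,\pi)$ and $4q - a^2 > 0$; this is exactly what guarantees that the poles of $1/Z_{E/\F_q}(u)$ at $u = \gamma_1^{-1}$ and $u = \gamma_2^{-1}$ are simple and that none of the denominators appearing below vanish.

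First I would compute the two residues directly from the factored form \eqref{1overZ}. Using the simple-pole formula together with $\gamma_1 \gamma_2 = q$ (so that $q\gamma_1^{-1} = \gamma_2$ and $\gamma_1^2 - q = \gamma_1(\gamma_1 - \gamma_2)$), the residue at $u = \gamma_1^{-1}$ collapses to $\gamma_1^N (1-\gamma_1)(1-\gamma_2)/(\gamma_1 - \gamma_2)$, and the residue at $u = \gamma_2^{-1}$ is the same expression with $\gamma_1$ and $\gamma_2$ interchanged. Recognising $(1-\gamma_1)(1-\gamma_2) = 1 - a + q$, this gives, for $N \geq 1$, the closed form
	\[\sum_{\deg(D) = N} \mu_{E/\F_q}(D) = -(q + 1 - a)\,\frac{\gamma_1^N - \gamma_2^N}{\gamma_1 - \gamma_2}.\]

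Next I would sum. Writing $c_N = \sum_{\deg(D) = N}\mu_{E/\F_q}(D)$ and recalling that $c_0 = 1$, I would evaluate $M_{E/\F_q}(X) = c_0 + \sum_{N=1}^{X-1} c_N$ by summing the two geometric series in $\gamma_1$ and $\gamma_2$ separately; the constant term $c_0 = 1$ cancels against a boundary term, leaving the clean expression
	\[M_{E/\F_q}(X) = \frac{\gamma_1^X - \gamma_2^X}{\gamma_1 - \gamma_2} - q\,\frac{\gamma_1^{X-1} - \gamma_2^{X-1}}{\gamma_1 - \gamma_2}.\]
Equivalently, one may bypass the separate treatment of $c_0$ by observing that $M_{E/\F_q}(X)$ is the coefficient of $u^{X-1}$ in $\bigl((1-u)Z_{E/\F_q}(u)\bigr)^{-1} = (1 - qu)/\bigl((1-\gamma_1 u)(1-\gamma_2 u)\bigr)$ and extracting it by partial fractions; either route yields the same formula.

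Finally I would convert to trigonometric form. Substituting $\gamma_j = \sqrt{q}\,e^{\pm i\theta}$ gives $(\gamma_1^N - \gamma_2^N)/(\gamma_1 - \gamma_2) = q^{(N-1)/2}\sin(N\theta)/\sin\theta$; dividing by $q^{X/2}$ and expanding $\sin((X-1)\theta)$, then using $\sqrt{q}\cos\theta = a/2$ and $\sqrt{q}\sin\theta = \sqrt{4q - a^2}/2$, reduces $M_{E/\F_q}(X)/q^{X/2}$ to $\cos(X\theta) + \tfrac{2 - a}{\sqrt{4q - a^2}}\sin(X\theta)$. The last step writes this as $R\cos(\omega + X\theta)$ by matching coefficients, namely $R\cos\omega = 1$ and $R\sin\omega = (a-2)/\sqrt{4q - a^2}$, so that $R^2 = 1 + (a-2)^2/(4q - a^2) = 4(q + 1 - a)/(4q - a^2)$ and $\tan\omega = (a-2)/\sqrt{4q - a^2}$. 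The one point requiring care — and the main, if minor, obstacle — is fixing the branch of the arctangent: because the coefficient of $\cos(X\theta)$ is exactly $1 > 0$, we have $R\cos\omega > 0$, which forces $\cos\omega > 0$ and hence $\omega \in (-\pi/2, \pi/2)$, rendering $\omega = \arctan\bigl((a-2)/\sqrt{4q - a^2}\bigr)$ unambiguous and matching the stated normalisation of the amplitude $R = 2\sqrt{(q + 1 - a)/(4q - a^2)}$.
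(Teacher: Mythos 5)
Your proof is correct and follows essentially the same route as the paper: extracting the coefficients $\sum_{\deg(D)=N}\mu_{E/\F_q}(D)$ from the residue identity \eqref{muCauchy}, summing the resulting geometric series, and converting to amplitude--phase form, with the same branch-of-arctangent observation that $\cos\omega > 0$ forces $\omega \in (-\pi/2, \pi/2)$. The only cosmetic difference is that you pass through the intermediate expression \eqref{Mertensasympa2} (which the paper records as a remark after the proposition) before matching amplitude and phase, whereas the paper obtains the phase form directly by writing $M_{E/\F_q}(X)/q^{X/2}$ as $2\Re\bigl(\tfrac{\overline{\gamma}-1}{\overline{\gamma}-\gamma}e^{iX\theta}\bigr)$.
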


We remark that \eqref{Mertensasympa} is equivalent to
\begin{equation}\label{Mertensasympa2}
\frac{M_{E/\F_q}(X)}{q^{X/2}} = \cos(X \theta) - \frac{a - 2}{\sqrt{4q - a^2}} \sin(X \theta)
\end{equation}
via the cosine angle-sum formula.

\begin{proof}
We write $\gamma$ for $\gamma_1$ and $\overline{\gamma}$ for $\gamma_2$. The fact that $Z_{E/\F_q}(u)$ has only simple zeroes is equivalent to $\gamma \neq \overline{\gamma}$, and hence that $a \neq \pm 2 \sqrt{q}$, and consequently
\begin{align*}
\Res_{u = \gamma^{-1}} \frac{1}{u^{N + 1}} \frac{1}{Z_{E/\F_q}(u)} & = \lim_{u \to \gamma^{-1}} \left(u - \gamma^{-1}\right) \frac{1}{u^{N + 1}} \frac{(1 - u) (1 - qu)}{\left(1 - \gamma u\right) \left(1 - \overline{\gamma} u\right)}	\\
& = - \gamma^N \frac{(\gamma - 1) \left(\overline{\gamma} - 1\right)}{\overline{\gamma} - \gamma}.
\end{align*}
Similarly,
	\[\Res_{u = \overline{\gamma}^{-1}} \frac{1}{u^{N + 1}} \frac{1}{Z_{E/\F_q}(u)} = \overline{\gamma}^N \frac{(\gamma - 1) \left(\overline{\gamma} - 1\right)}{\overline{\gamma} - \gamma}.
\]
It follows that when $N = 0$, the left-hand side of \eqref{muCauchy} is equal to $1$, as the sum over the inverse zeroes $\gamma, \overline{\gamma}$ on the right-hand side of \eqref{muCauchy} vanishes when $N = 0$, whereas
	\[\sum_{\deg(D) = 0}{\mu_{E/\F_q}(D)} = 1,
\]
as the only effective divisor of $E$ of degree zero is the zero divisor. Thus
	\[\sum_{\deg(D) = N}{\mu_{E/\F_q}(D)} = \frac{(\gamma - 1) \left(\overline{\gamma} - 1\right)}{\overline{\gamma} - \gamma} \left(\gamma^N - \overline{\gamma}^N\right) + \begin{cases}
1 & \text{if $N = 0$,}	\\
0 & \text{otherwise}.
\end{cases}\]
Summing this expression from $N = 0$ to $N = X - 1$ and evaluating the resulting geometric series, we find that
	\[M_{E/\F_q}(X) = \frac{\overline{\gamma} - 1}{\overline{\gamma} - \gamma} \gamma^X + \frac{\gamma - 1}{\gamma - \overline{\gamma}} \overline{\gamma}^X,
\]
and hence that
	\[\frac{M_{E/\F_q}(X)}{q^{X/2}} = 2\Re\left(\frac{\overline{\gamma} - 1}{\overline{\gamma} - \gamma} e^{i X \theta}\right) = \frac{\sqrt{q + 1 - 2 \sqrt{q} \cos \theta}}{\sqrt{q} \sin \theta} \cos\left(\omega + X \theta\right),
\]
where
	\[\omega = \arctan\left(\frac{\sqrt{q} \cos \theta - 1}{\sqrt{q} \sin \theta}\right),
\]
and we have used the fact that $\gamma = \sqrt{q} e^{i \theta}$. We complete the proof by noting that
	\[2 \sqrt{q} \sin \theta = \sqrt{4 q - a^2}
\]
as $a = 2 \sqrt{q} \cos \theta$ with $0 \leq \theta \leq \pi$.
\end{proof}

We also have the following analogous result in the case where $Z_{E/\F_q}(u)$ has a zero of multiple order.

\begin{proposition}\label{ellcurveprop2}
Let $E$ be an elliptic curve over a finite field $\F_q$ of characteristic $p$, and suppose that $Z_{E/\F_q}(u)$ has zeroes of multiple order, so that $q = p^m$ with $a = \pm 2 \sqrt{q}$, where $m$ is even. Then
\begin{equation}\label{Mertensasympa3}
\frac{M_{E/\F_q}(X)}{q^{X / 2}} = - (\pm 1)^X \left(1 \mp \frac{1}{\sqrt{q}}\right) X + (\pm 1)^X.
\end{equation}
\end{proposition}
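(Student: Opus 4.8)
The plan is to follow the same residue-theoretic strategy as in the proof of Proposition \ref{ellcurveprop1}, the essential change being that the single inverse zero now produces a \emph{double} pole rather than two simple ones. When $a = \pm 2 \sqrt{q}$ we have $\gamma_1 = \gamma_2$; writing $\gamma = \pm \sqrt{q}$ for this common value, the factorisation \eqref{1overZ} becomes
\[
\frac{1}{Z_{E/\F_q}(u)} = \frac{(1 - u)(1 - qu)}{(1 - \gamma u)^2},
\]
so that $1/Z_{E/\F_q}(u)$ has a pole of order two at $u = \gamma^{-1}$, this being the only inverse zero counted without multiplicity. First I would compute the residue of $u^{-(N+1)}/Z_{E/\F_q}(u)$ at this double pole.

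To do so, I would use $(1 - \gamma u)^2 = \gamma^2 (u - \gamma^{-1})^2$ together with $\gamma^2 = q$ to write
\[
\Res_{u = \gamma^{-1}} \frac{1}{u^{N+1}} \frac{1}{Z_{E/\F_q}(u)} = \frac{1}{\gamma^2}\frac{d}{du}\left[\frac{(1 - u)(1 - qu)}{u^{N+1}}\right]_{u = \gamma^{-1}}.
\]
Carrying out the differentiation and repeatedly using the relation $q \gamma^{-1} = \gamma$, the two contributions --- one from differentiating the numerator $(1-u)(1-qu)$ and one from differentiating $u^{-(N+1)}$ --- each collapse to a multiple of the quantity $2 - \gamma - \gamma^{-1}$, and I expect the residue to simplify to $-N \gamma^N (2 - \gamma - \gamma^{-1})$. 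This residue computation is the main obstacle: it is the only genuinely new calculation relative to Proposition \ref{ellcurveprop1}, and the cancellations must be tracked carefully.

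With the residue in hand, the preceding lemma (the vanishing of the left-hand side of \eqref{muCauchy} for $N \geq 1$) gives $\sum_{\deg(D) = N} \mu_{E/\F_q}(D) = N \gamma^N (2 - \gamma - \gamma^{-1})$ for $N \geq 1$, while the $N = 0$ term equals $1$ exactly as in Proposition \ref{ellcurveprop1} (and the residue vanishes there). Summing from $N = 0$ to $N = X - 1$ then reduces to evaluating the arithmetic-geometric sum
\[
\sum_{N = 0}^{X - 1} N \gamma^N = \frac{\gamma - X \gamma^X + (X - 1) \gamma^{X + 1}}{(1 - \gamma)^2}.
\]
The key simplification is the identity $2 - \gamma - \gamma^{-1} = - \gamma^{-1}(1 - \gamma)^2$, which cancels the denominator $(1 - \gamma)^2$ and, after collecting terms, leaves the clean expression $M_{E/\F_q}(X) = X \gamma^{X - 1} - (X - 1) \gamma^X$.

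Finally I would divide by $q^{X/2}$ and substitute $\gamma = \pm \sqrt{q}$, noting $\gamma^X = (\pm 1)^X q^{X/2}$ and $\gamma^{X - 1} = (\pm 1)^X q^{X/2} \cdot (\pm 1)/\sqrt{q}$, so that
\[
\frac{M_{E/\F_q}(X)}{q^{X/2}} = -(\pm 1)^X \left(1 \mp \frac{1}{\sqrt{q}}\right) X + (\pm 1)^X,
\]
which is \eqref{Mertensasympa3}. The only subtlety in this last step is the bookkeeping of signs: since $\gamma^{-1} = \pm 1/\sqrt{q}$ carries the same sign as $\gamma$, the factor $1 - (\pm 1)/\sqrt{q}$ produces the $\mp$ appearing in the stated formula.
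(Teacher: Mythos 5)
Your proposal is correct and follows essentially the same route as the paper: compute the residue at the double pole $u = \gamma^{-1}$, invoke the vanishing of the contour integral for $N \geq 1$ to extract $\sum_{\deg(D) = N} \mu_{E/\F_q}(D)$, and sum over $0 \leq N \leq X - 1$. Your residue $N\gamma^{N-1}(1 - \gamma)^2 = -N\gamma^N\left(2 - \gamma - \gamma^{-1}\right)$ agrees exactly with the paper's $(\pm 1)^{N+1}\left(\sqrt{q} \mp 1\right)^2 N q^{(N-1)/2}$, and the closed form $M_{E/\F_q}(X) = X\gamma^{X-1} - (X-1)\gamma^X$ reduces to \eqref{Mertensasympa3} as you state.
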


\begin{proof}
If $a = \pm 2 \sqrt{q}$, then $\gamma = \overline{\gamma} = \pm \sqrt{q}$. Now
\begin{align*}
\Res_{u = \pm q^{- 1 / 2}}{\frac{1}{u^{N + 1}} \frac{1}{Z_{E/\F_q}(u)}} & = \lim_{u \to \pm q^{- 1 / 2}} \frac{d}{du} \frac{\left(u \mp q^{- 1 / 2}\right)^2}{u^{N + 1}} \frac{(1 - u)(1 - qu)}{\left(1 \mp \sqrt{q} u\right)^2}	\\
& = (\pm 1)^{N + 1} \left(\sqrt{q} \mp 1\right)^2 N q^{(N - 1) / 2}.
\end{align*}
As this vanishes when $N = 0$, we must again have that
	\[\frac{1}{2 \pi i} \oint_{\CC_T}{\frac{1}{u} \frac{1}{Z_{E/\F_q}(u)} \, du} = 1
\]
via \eqref{muCauchy}, and consequently
	\[\sum_{\deg(D) = N}{\mu_{E/\F_q}(D)} = - (\pm 1)^{N + 1} \left(\sqrt{q} \mp 1\right)^2 N q^{(N - 1) / 2} + \begin{cases}
1 & \text{if $N = 0$,}	\\
0 & \text{otherwise,}
\end{cases}\]
which leads to the result upon summing over all $0 \leq N \leq X - 1$ and then dividing through by $q^{X / 2}$.
\end{proof}

\section{Proof of Theorem \ref{mainellcurvethm}}

Using Propositions \ref{ellcurveprop1} and \ref{ellcurveprop2}, we are now able to determine the quantity
	\[\limsup_{X \to \infty} \frac{\left|M_{E/\F_q}(X)\right|}{q^{X / 2}}
\]
for each elliptic curve $E$ over a given finite field $\F_q$. We must consider each possible combination of values for $q$ and $a$ as determined in Lemma \ref{Waterhouselemma}, which will culminate in a proof of Theorem \ref{mainellcurvethm}. 

\begin{deflist}{(4) (iii)}
\item[(1)]	If $q = p^m$ with $a \not\equiv 0 \pmod{p}$ and $|a| < 2 \sqrt{q}$, then by \eqref{Mertensasympa},
	\[\left|M_{E/\F_q}(X)\right| \leq 2 \sqrt{\frac{q + 1 - a}{4q - a^2}} q^{X / 2}
\]
for all $X \geq 1$. As the Frobenius angle $\theta$ is such that $\theta/\pi$ is irrational, the Weyl equidistribution theorem implies that $X \theta$ is equidistributed modulo $\pi$ as $X$ tends to infinity, and hence
	\[\limsup_{X \to \infty} \frac{\left|M_{E/\F_q}(X)\right|}{q^{X / 2}} = 2 \sqrt{\frac{q + 1 - a}{4q - a^2}} = \sqrt{1 + \frac{(a - 2)^2}{4q - a^2}}.
\]
So the Mertens conjecture for $E/\F_q$ is true precisely when the inequality
	\[\sqrt{1 + \frac{(a - 2)^2}{4q - a^2}} \leq 1
\]
holds, which can only occur when $a = 2$, provided that $p \neq 2$. Note that even in this case, we nevertheless have via the Weyl equidistribution theorem that for every $\e > 0$, there exist infinitely many values of $X$, dependent on $q$, for which
	\[\left|M_{E/\F_q}(X)\right| > (1 - \e) q^{X / 2}.
\]
\item[(2)] If $q = p^m$ with $a = \pm 2 \sqrt{q}$, where $m$ is even, then from \eqref{Mertensasympa3},
	\[\limsup_{X \to \infty} \frac{\left|M_{E/\F_q}(X)\right|}{q^{X / 2}} = \infty.
\]
\item[(3) (i)] If $q = p^m$ with $a = \sqrt{q}$, where $m$ is even and $p \not\equiv 1 \pmod{3}$, we have from \eqref{Mertensasympa2} that
	\[\frac{M_{E/\F_q}(X)}{q^{X / 2}} = \cos\left(\frac{\pi X}{3}\right) - \frac{\sqrt{3}}{3}\left(1 - \frac{2}{\sqrt{q}}\right) \sin\left(\frac{\pi X}{3}\right).
\]
We calculate the six cases of $X \pmod{6}$: \vspace{0.2cm}

\begin{center}
\begin{tabular}{cc}
\hline
$X \pmod{6}$ & $M_{E/\F_q}(X) / q^{X / 2}$	\\	\hline
$0$ & $1$	\\
$1$ & $1 / \sqrt{q}$	\\
$2$ & $- 1 + 1 / \sqrt{q}$	\\
$3$ & $- 1$	\\
$4$ & $- 1 / \sqrt{q}$	\\
$5$ & $1 - 1 / \sqrt{q}$	\\ \hline
\end{tabular}
\end{center} \vspace{0.2cm}

\noindent So for all $X \geq 1$,
	\[\left|M_{E/\F_q}(X)\right| \leq q^{X / 2},
\]
with equality occurring infinitely often.
\item[(3) (ii)] Similarly, if $q = p^m$ with $a = - \sqrt{q}$, where $m$ is even and $p \not\equiv 1 \pmod{3}$,
	\[\frac{M_{E/\F_q}(X)}{q^{X / 2}} = \cos\left(\frac{2 \pi X}{3}\right) + \frac{\sqrt{3}}{3}\left(1 + \frac{2}{\sqrt{q}}\right) \sin\left(\frac{2 \pi X}{3}\right).
\]
The three cases of $X \pmod{3}$ are \vspace{0.2cm}

\begin{center}
\begin{tabular}{cc}
\hline
$X \pmod{3}$ & $M_{E/\F_q}(X) / q^{X / 2}$	\\	\hline
$0$ & $1$	\\
$1$ & $1 / \sqrt{q}$	\\
$2$ & $- 1 - 1 / \sqrt{q}$	\\ \hline
\end{tabular}
\end{center} \vspace{0.2cm}

\noindent This shows that
	\[\limsup_{X \to \infty} \frac{\left|M_{E/\F_q}(X)\right|}{q^{X / 2}} = 1 + \frac{1}{\sqrt{q}}.
\]
\item[(4) (i)] If $q = 2^m$ with $a = \sqrt{2 q}$, where $m$ is odd, then
	\[\frac{M_{E/\F_{2^m}}(X)}{2^{m X / 2}} = \cos\left(\frac{\pi X}{4}\right) - \left(1 - \frac{1}{2^{(m - 1) / 2}}\right) \sin\left(\frac{\pi X}{4}\right).
\]
We analyse the eight cases of $X \pmod{8}$: \vspace{0.2cm}

\begin{center}
\begin{tabular}{cc}
\hline
$X \pmod{8}$ & $M_{E/\F_{2^m}}(X) / 2^{m X / 2}$	\\	\hline
$0$ & $1$	\\
$1$ & $2^{- m / 2}$	\\
$2$ & $- 1 + 2^{- (m - 1) / 2}$	\\
$3$ & $- \sqrt{2} + 2^{- m / 2}$	\\
$4$ & $- 1$	\\
$5$ & $- 2^{- m / 2}$	\\
$6$ & $1 - 2^{- (m - 1) / 2}$	\\
$7$ & $\sqrt{2} - 2^{- m / 2}$	\\ \hline
\end{tabular}
\end{center} \vspace{0.2cm}

\noindent So when $m = 1$, we have that
	\[\left|M_{E/\F_2}(X)\right| \leq q^{X / 2}
\]
for all $X \geq 1$, with equality occurring infinitely often, while for $m \geq 3$,
	\[\limsup_{X \to \infty} \frac{\left|M_{E/\F_{2^m}}(X)\right|}{2^{m X / 2}} = \sqrt{2} - \frac{1}{2^{m / 2}}.
\]
\item[(4) (ii)] Likewise, if $q = 2^m$ with $a = - \sqrt{2 q}$, where $m$ is odd, then
	\[\frac{M_{E/\F_{2^m}}(X)}{2^{m X / 2}} = \cos\left(\frac{3 \pi X}{4}\right) + \left(1 + \frac{1}{2^{(m - 1) / 2}}\right) \sin\left(\frac{3 \pi X}{4}\right).
\]
The table of values of $X \pmod{8}$ is \vspace{0.2cm}

\begin{center}
\begin{tabular}{cc}
\hline
$X \pmod{8}$ & $M_{E/\F_{2^m}}(X) / 2^{m X / 2}$	\\	\hline
$0$ & $1$	\\
$1$ & $2^{- m / 2}$	\\
$2$ & $1 + 2^{- (m - 1) / 2}$	\\
$3$ & $\sqrt{2} + 2^{- m / 2}$	\\
$4$ & $- 1$	\\
$5$ & $- 2^{- m / 2}$	\\
$6$ & $ - 1 - 2^{- (m - 1) / 2}$	\\
$7$ & $- \sqrt{2} - 2^{- m / 2}$	\\ \hline
\end{tabular}
\end{center} \vspace{0.2cm}

\noindent Thus
	\[\limsup_{X \to \infty} \frac{\left|M_{E/\F_{2^m}}(X)\right|}{2^{m X / 2}} = \sqrt{2} + \frac{1}{2^{m / 2}}.
\]
\item[(4) (iii)] If $q = 3^m$ with $a = \sqrt{3 q}$, where $m$ is odd, then
	\[\frac{M_{E/\F_{3^m}}(X)}{3^{m X / 2}} = \cos\left(\frac{\pi X}{6}\right) - \left(1 - \frac{2}{3^{m / 2}}\right) \sin\left(\frac{\pi X}{6}\right).
\]
The twelve cases of $X \pmod{12}$ are \vspace{0.2cm}

\begin{center}
\begin{tabular}{cc}
\hline
$X \pmod{12}$ & $M_{E/\F_{3^m}}(X) / 3^{m X / 2}$	\\	\hline
$0$ & $1$	\\
$1$ & $(\sqrt{3} - 1) / 2 + 3^{- m / 2}$	\\
$2$ & $- (\sqrt{3} - 1) / 2 + 3^{- (m - 1) / 2}$	\\
$3$ & $- 1 + 2 \times 3^{- m / 2}$	\\
$4$ & $- (\sqrt{3} + 1) / 2 - 3^{- (m - 1) / 2}$	\\
$5$ & $- (\sqrt{3} + 1) / 2 + 3^{- m / 2}$	\\
$6$ & $- 1$	\\
$7$ & $- (\sqrt{3} - 1) / 2 - 3^{- m / 2}$	\\
$8$ & $(\sqrt{3} - 1) / 2 - 3^{- (m - 1) / 2}$	\\
$9$ & $1 - 2 \times 3^{- m / 2}$	\\
$10$ & $(\sqrt{3} + 1) / 2 + 3^{- (m - 1) / 2}$	\\
$11$ & $(\sqrt{3} + 1) / 2 - 3^{- m / 2}$	\\ \hline
\end{tabular}
\end{center} \vspace{0.2cm}

\noindent Consequently,
	\[\limsup_{X \to \infty} \frac{\left|M_{E/\F_{3^m}}(X)\right|}{3^{m X / 2}} = \frac{\sqrt{3} + 1}{2} + \frac{1}{3^{(m - 1) / 2}}.
\]
\item[(4) (iv)] Next, if $q = 3^m$ with $a = - \sqrt{3 q}$, where $m$ is odd, then
	\[\frac{M_{E/\F_{3^m}}(X)}{3^{m X / 2}} = \cos\left(\frac{5 \pi X}{6}\right) + \left(1 + \frac{2}{3^{m / 2}}\right) \sin\left(\frac{5 \pi X}{6}\right).
\]
Now we have the table \vspace{0.2cm}

\begin{center}
\begin{tabular}{cc}
\hline
$X \pmod{12}$ & $M_{E/\F_{3^m}}(X) / 3^{m X / 2}$	\\	\hline
$0$ & $1$	\\
$1$ & $- (\sqrt{3} - 1) / 2 + 3^{- m / 2}$	\\
$2$ & $- (\sqrt{3} - 1) / 2 - 3^{- (m - 1) / 2}$	\\
$3$ & $1 + 2 \times 3^{- m / 2}$	\\
$4$ & $- (\sqrt{3} + 1) / 2 - 3^{- (m - 1) / 2}$	\\
$5$ & $(\sqrt{3} + 1) / 2 - 3^{- m / 2}$	\\
$6$ & $- 1$	\\
$7$ & $(\sqrt{3} - 1) / 2 - 3^{- m / 2}$	\\
$8$ & $(\sqrt{3} - 1) / 2 + 3^{- (m - 1) / 2}$	\\
$9$ & $- 1 - 2 \times 3^{- m / 2}$	\\
$10$ & $(\sqrt{3} + 1) / 2 + 3^{- (m - 1) / 2}$	\\
$11$ & $- (\sqrt{3} + 1) / 2 + 3^{- m / 2}$	\\ \hline
\end{tabular}
\end{center} \vspace{0.2cm}

\noindent So we have that
	\[\limsup_{X \to \infty} \frac{\left|M_{E/\F_{3^m}}(X)\right|}{3^{m X / 2}} = \frac{\sqrt{3} + 1}{2} + \frac{1}{3^{(m - 1) / 2}}.
\]
\item[(5)] Finally, if $q = p^m$ with $a = 0$, where either $m$ is even and $p \not\equiv 1 \pmod{4}$, or $m$ is odd, then
	\[\frac{M_{E/\F_q}(X)}{q^{X / 2}} = \cos\left(\frac{\pi X}{2}\right) + \frac{1}{\sqrt{q}} \sin\left(\frac{\pi X}{2}\right).
\]
The four cases of $X \pmod{4}$ are \vspace{0.2cm}

\begin{center}
\begin{tabular}{cc}
\hline
$X \pmod{4}$ & $M_{E/\F_q}(X) / q^{X / 2}$	\\	\hline
$0$ & $1$	\\
$1$ & $1 / \sqrt{q}$	\\
$2$ & $- 1$	\\
$3$ & $- 1 / \sqrt{q}$	\\ \hline
\end{tabular}
\end{center} \vspace{0.2cm}

\noindent Thus we have the inequality
	\[\left|M_{E/\F_q}(X)\right| \leq q^{X / 2}
\]
for all $X \geq 1$, with equality occurring infinitely often.
\end{deflist}

\subsection*{Acknowledgments}
The author thanks Jim Borger for his helpful advice and corrections of this paper, and also Mike Zieve for alerting the author to the results in \cite{Howe}.

\end{document}